\theoremstyle{definition}
\newtheorem{definition}{\it Definition}[section]
\newtheorem{example}[definition]{Example}
\newtheorem{remark}{Remark}
\theoremstyle{plain}
\newtheorem{theorem}{Theorem}[section]
\newtheorem{corollary}{Corollary}[section]
\providecommand{\prob}{\mathsf P}
\numberwithin{equation}{section}
\newcommand{\R}{\mathbb R}
\newcommand{\E}{\textsf E}
\newcommand{\Sub}{\mathrm{Sub}}
\newcommand{\Cov}{\textrm{Cov}}
\newcommand{\T}{\mathbf T}
\newcommand{\Var}{\textrm{Var}}
\begin{document}
\begin{center}
{\large\bf Investigation of Airy equations with random initial conditions}
 \end{center}

\begin{center}
{{\bf Lyudmyla Sakhno}}
\end{center}
\begin{center}
 {\it Department of Probability Theory, Statistics and Actuarial Mathematics\\ Taras Shevchenko National University of Kyiv,  Kyiv,  Ukraine}
\end{center}

\begin{abstract}
{The paper investigates properties of  mean-square solutions to the Airy equation with random initial data given by stationary processes. The result on the modulus of continiuty of the solution is stated and properties of the covariance function are described. Bounds for the distributions of the suprema of solutions under $\varphi$-sub-Gaussian initial conditions are presented. Several examples are provided to illustrate the results. Extension of the results to the case of fractional Airy equation is given.}

{\bf Key words:} Airy equation; random initial condition; stationary processes; sub-Gaussian processes, distribution of supremum

{\bf 2010 MSC:} 35G19; 35R60; 60G20; 60G60
\end{abstract}

\section{Introduction}

Dispersive partial differential equations have been the topic of intensive studies for centuries, starting from the classical physics and going widely beyond. These equations serve to model various vibrating media such as waves on string, in liquids or in air. Probably the most familiar problems in which dispersive effects appear are the classical problems of water waves descriptions and one of the most famous equations is the Korteweg--de~Vries (KdV) equation derived to model the propagation of low amplitude long water waves in a shallow canal. Nowadays many versions and higher order generalizations of the KdV equation are of use in different areas including hydrodynamics, plasma physics, electrodynamics, in studies of electromagnetic and acoustic waves, waves in elastic media, turbulence, traffic flows, mass transport and others.

In the present paper we consider the Airy equation or linear Korteweg--de~Vries equation
\begin{equation}\label{intr1} 
\frac{\partial u}{\partial t} = - \frac{\partial^3 u}{\partial^3 x},\,\, t > 0, \,\,x \in \mathbb{R},
\end{equation}
subject to the random initial condition 
\begin{equation}\label{intr2}
u(0, x) = \eta(x), \,\, x \in \mathbb{R},
\end{equation}
with $\eta$ being a stationary stochastic process. 

Initial value problem \eqref{intr1}-\eqref{intr2} was treated in \cite{BKLO}, namely, the asymptotic behavior was analyzed for the rescaled solutions under weakly dependent stationary initial data. Note that in the recent literature such approach has been widely applied to study  rescaled  solutions  to the heat, fractional heat, Burgers and other equations with Gaussian and non-Gaussian initial conditions possessing weak or strong dependence (see, for example, papers \cite{AL2001, RMAA2001}  among many others).

The purpose of the present paper is to establish the upper bounds for the distribution of the supremum of solution to \eqref{intr1}-\eqref{intr2} under the assumption that the initial condition is given by a $\varphi$-sub-Gaussian process. These processes provide a natural generalization of Gaussian and sub-Gaussian ones and possess well described properties, which is important for applications. Theory developed for these processes allows to derive many useful  bounds for the distribution of various functionals of these processes (see, \cite{BK}). The present paper is close to the papers \cite{BKOS, KOSV2018, KOSV, HS} where higher order dispersive equations and the heat equation were studied under $\varphi$-sub-Gaussian initial conditions. 

Following \cite{BKLO}, we consider the solution to the random initial value problem \eqref{intr1}-\eqref{intr2} in the mean square sense and write its representation in a form of stochastic integral, and corresponding representation for its covariance function. The solution field is a stationary (of the second order) both in time and space variables.

We state the result on the modulus of continuity of solution and reveal several interesting properties of the covariance function of  solution. 
We show that the covariance function itself is a solution to a particular deterministic Airy equation and as such it inherits well known properties of solutions to Airy equations. In particular, many useful bounds can be written for the covariance function. 

It is important to note that from the statistical point of view,  the covariance of the solution presents an example of non-separable space-time covariance function. So, we reveal a convenient way to construct a space-time covariance with the use of another covariance model, which will possess a very transparent  physical interpretation and a lot of well described properties due to its representation as an oscillatory integral.

From the probabilistic consideration, we also deduce such a  general fact that if the Airy equation \eqref{intr1} is considered with a (nonrandom) initial condition $u(0,x)=u_0(x)$, $x\in \R$,  given by a real positive-definite kernel, then the solution $u(t, x)$, $(t, x)\in \R_+\times\R$, is a positive definite kernel as well. To the best of our knowledge, this interesting fact was not reflected before in the literature on the Airy equations.

Next, using the methods developed for $\varphi$-sub-Gaussian process, the bounds are obtained for the tails of the distribution of supremum of solution to \eqref{intr1}-\eqref{intr2}. 

The main condition for the bounds to hold is stated in terms of the spectral measure of the initial  data $\eta$ in \eqref{intr2}. It is shown that this condition is satisfied for several models where the initial data process $\eta$ is itself a solution to a stochastic differential equation. In particular, Mat\'ern model, Ornstein-Uhlenbeck and fractional Ornstein-Uhlenbeck processes can be considered to model random initial data.
 
 Finally, the extension of the results is outlined for the case of the fractional Airy equation introduces in \cite{MO}:
$ u_t = {\cal{D}}^\alpha_x u$, $t > 0$, $x \in \mathbb{R}$,
where ${\cal{D}}^\alpha_x$ represents the Riesz--Feller fractional derivative.

The paper is organized as follows. In Section 2, following \cite{BKLO}, we give the expression in the form of a stochastic integral for the mean square solution to the initial value problem   \eqref{intr1}--\eqref{intr2}. In Section 3 the result on the modulus of continuity of solution is stated. Section 4 describes properties of the covariance function of the solution.  Section 5 collects definitions and properties of $\varphi$-sub-Gaussian processes as a preparation for the study of the distribution of suprema of solutions to \eqref{intr1}--\eqref{intr2} under $\varphi$-sub-Gaussian initial conditions, which is done in Section 6. To illustrate the results stated, in Section 7 several examples of random initial data are given. Section 8 outlines the extension to the case of fractional Airy equation introduced in \cite{MO}. 

\section{Solution to the Airy equation with initial condition given by a stationary stochastic process}

Consider the Airy equation 
\begin{equation} \label{4.1}
\frac{\partial u}{\partial t} = - \frac{\partial^3 u}{\partial^3 x},\,\, t > 0, \,\,x \in \mathbb{R},
\end{equation}
subject to the random initial condition 
\begin{equation}\label{4.2}
u(0, x) = \eta(x), \,\, x \in \mathbb{R},
\end{equation}
where $\eta$ is a stochastic process satisfying the condition below.

\begin{description}
\item[A.]  $\eta(x), x \in \mathbb{R}$, is a real, measurable, mean-square continuous stationary (of the second order, that is, in a weak sense) centered stochastic process defined on a complete probablity space $(\Omega, \cal{F}, P)$. 
\end{description}

Let $B_\eta(x), x \in \mathbb{R},$ be a covariance function of the process $\eta(x), x \in \mathbb{R}$, with the spectral  representation 
\begin{equation}\label{4.3}
B_\eta(x) = \int_\mathbb{R} \cos(\lambda x) dF(\lambda),
\end{equation}
where $F(\lambda), \lambda \in \mathbb{R},$ is a spectral measure, and for the process itself we can write the spectral representation
\begin{equation}\label{4.4} 
\eta(x) = \int_\mathbb{R} e^{i \lambda x} Z(d\lambda).
\end{equation}

The stochastic integral \eqref{4.4} is considered as $L_2(\Omega)$ integral. The orthogonal complex-valued random measure $Z$ is such that $\E|Z(d\lambda)|^2 = F(d\lambda)$. 

Following \cite{BKLO}, we can write the representation of mean square solution to the problem \eqref{4.1}--\eqref{4.2} and the expression for its covariance function.

Consider the process $\{u(t, x), t > 0, x \in \mathbb{R}\}$ defined by 
\begin{equation}\label{4.5}
u(t, x) = \int_\mathbb{R} g(t, x - y)\eta(y) dy,
\end{equation}
where the function $g$ is the fundamental solution to equation \eqref{4.1}:
\begin{equation}\label{4.6}
g(t, x) = \frac{1}{2\pi} \int_\mathbb{R} e^{-i\alpha x-i\alpha^3 t}\, d\alpha=\frac{1}{\pi} \int_0^\infty \cos(\alpha x+\alpha^3 t)\, d\alpha=\frac{1}{\sqrt\pi\sqrt[3]{3t}}Ai\Big(\frac{x}{\sqrt[3]{3t}}\Big),\, t > 0,\, x \in \mathbb{R},
\end{equation}
and
\begin{equation*}
Ai(x) = \frac{1}{\sqrt\pi}\int_0^\infty \cos\Big(\alpha x+\frac{\alpha^3}{3}\Big)\, d\alpha, \, x \in \mathbb{R},
\end{equation*}
is the Airy function of the first kind.

In view \eqref{4.4}, the process  \eqref{4.5} can be written in the following form
\begin{equation}\label{4.7}    
u(t, x) = \int_\mathbb{R} \exp\Big\{ i\lambda x +  i \lambda^3 t \Big\} Z(d\lambda).
\end{equation}
The process \eqref{4.7} can be interpreted as the mean-square or $L_2(\Omega)$ solution to the Cauchy problem \eqref{4.1}--\eqref{4.2} (see \cite{BKLO}). 

From  the representation \eqref{4.7} the covariance of the field $u$ can be calculated:
\begin{eqnarray}\label{4.88}
Cov \big( u(t, x), u(s, y)\big) &=& \int_\mathbb{R} \exp(i \lambda (x - y) +i \lambda^3(t - s) )dF(\lambda) \notag\\&=& \int_\mathbb{R} \cos(\lambda (x - y) +i \lambda^3(t - s) )dF(\lambda)\\
&:=& B(t-s, x-y)\notag.
\end{eqnarray}

From the expression \eqref{4.88} one can see that the random field $u$ is stationary with respect to time and space variables. For a fixed $t$ we have
$$
Cov \big( u(t, x), u(t, y)\big) = \int_\mathbb{R} \exp(i \lambda (x - y) )dF(\lambda) =B_\eta(x-y),
$$
that is, the covariance of the solution $u(t, \cdot)$ considered at any fixed time $t$ coincides with the covariance function of the initial process $\eta$.

Note that one of the commonly used approaches of studing PDEs with stationary initial conditions is via the second order analysis, that is, by considering their mean square solutions represented by stochastic integrals. 
Such approach takes its origins in the paper by Rosenblatt \cite{R} where the heat equation with stationary initial condition was treated and mean square representation for the solution was given. In the recent literature this approach is widely used for various classes of PDEs with random initial conditions, in particular, for studying their rescaled solutions, see, e.g., \cite{AL2001, BKLO, HS, RMAA2001}, to mention only few, see also references therein.

\section{Modulus of continuity of the solution}

In this section we state the general result on the modulus of continuity in the mean square of the field \eqref{4.7}. This result is of interest by itself, but will also be used  further in Section 6 for evaluation of the distribution of suprema of solutions. Denote $K=[a,b]\times[c,d]$ for arbitrary $a\ge 0$, $b,c,d \in \R$.

\begin{theorem}\label{th4.1} Let $u(t, x), t > 0, x \in \mathbb{R},$ be the random field given by \eqref{4.7} and assumption A hold. Suppose that 
 fore some $\beta \in (0, 1]$
\begin{equation}\label{4.8}
\int_\mathbb{R} \lambda^{6\beta} F(d \lambda) < \infty,
\end{equation}
then 
\begin{equation}\label{4.9}
\sigma(h) := \sup_{\substack {(t,x), (s,y) \in K: \\ |t - s| \le h, |x - y| \le h}} \Big( \E (u(t, x) - u(s, y))^2\Big)^{1/2} \le  c(\beta) h^\beta,
\end{equation}
where 
\begin{equation}\label{4.10}
c(\beta) = 2^{1-\beta}\Big( \int_\mathbb{R} (\lambda + \lambda^{3})^{2\beta} F(d\lambda)\Big)^{1/2}.
\end{equation}
\end{theorem}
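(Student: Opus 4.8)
The plan is to estimate the mean-square increment $\E(u(t,x)-u(s,y))^2$ directly from the spectral representation \eqref{4.7}. Writing
$$
u(t,x)-u(s,y) = \int_\mathbb{R}\Big(e^{i\lambda x + i\lambda^3 t} - e^{i\lambda y + i\lambda^3 s}\Big)Z(d\lambda),
$$
the isometry property of the stochastic integral together with $\E|Z(d\lambda)|^2 = F(d\lambda)$ gives
$$
\E(u(t,x)-u(s,y))^2 = \int_\mathbb{R}\Big|e^{i\lambda x + i\lambda^3 t} - e^{i\lambda y + i\lambda^3 s}\Big|^2 F(d\lambda) = \int_\mathbb{R} 4\sin^2\!\Big(\tfrac{\lambda(x-y)+\lambda^3(t-s)}{2}\Big)F(d\lambda).
$$

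The key step is to bound the integrand uniformly over the region $|t-s|\le h$, $|x-y|\le h$. First I would use the elementary inequality $|\sin\theta|\le |\theta|^\beta$ for $|\theta|$ small combined with $|\sin\theta|\le 1$; more precisely, one has $|\sin\theta| \le |\theta|^\beta$ for all $\theta$ when $\beta\in(0,1]$ only after truncation, so the cleaner route is $4\sin^2(\theta)\le 4|\theta|^{2\beta}\cdot\mathbf{1}_{|\theta|\le 1} + 4\cdot\mathbf{1}_{|\theta|>1} \le 2^{2}|\theta|^{2\beta}$ is not literally true; instead use $|\sin\theta|\le \min(1,|\theta|)\le |\theta|^\beta$ valid for $|\theta|\le 1$ and $|\sin\theta|\le 1 \le |\theta|^\beta$ for $|\theta|\ge 1$, hence $|\sin\theta|\le|\theta|^{\beta}$ for all real $\theta$ and $\beta\in(0,1]$. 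Therefore
$$
4\sin^2\!\Big(\tfrac{\lambda(x-y)+\lambda^3(t-s)}{2}\Big) \le 4\Big|\tfrac{\lambda(x-y)+\lambda^3(t-s)}{2}\Big|^{2\beta} = 2^{2-2\beta}\big|\lambda(x-y)+\lambda^3(t-s)\big|^{2\beta}.
$$
Next, using $|\lambda(x-y)+\lambda^3(t-s)|\le |\lambda|\,|x-y| + |\lambda|^3|t-s| \le (|\lambda|+|\lambda|^3)h$ for $|x-y|\le h$, $|t-s|\le h$ (assuming $h\le 1$ so the bound is clean, or absorbing constants otherwise), and that $2\beta\le 2$ so $(\cdot)^{2\beta}$ preserves the factorization up to the subadditivity already used, we get the integrand bounded by $2^{2-2\beta}(|\lambda|+|\lambda|^3)^{2\beta}h^{2\beta}$.

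Integrating against $F$ then yields
$$
\E(u(t,x)-u(s,y))^2 \le 2^{2-2\beta}h^{2\beta}\int_\mathbb{R}(|\lambda|+|\lambda|^3)^{2\beta}F(d\lambda),
$$
and taking the supremum over the admissible pairs and then the square root gives \eqref{4.9} with the constant $c(\beta)=2^{1-\beta}\big(\int_\mathbb{R}(\lambda+\lambda^3)^{2\beta}F(d\lambda)\big)^{1/2}$ as claimed (here $(\lambda+\lambda^3)^{2\beta}$ is understood as $(|\lambda|+|\lambda|^3)^{2\beta}$, and by evenness of $F$ this matches the stated expression). Finiteness of this integral must be checked: expanding $(|\lambda|+|\lambda|^3)^{2\beta}$ and using that the dominant term near infinity is $|\lambda|^{6\beta}$ while near the origin it is $|\lambda|^{2\beta}$, one sees the integral is controlled by $\int_\mathbb{R}\lambda^{6\beta}F(d\lambda)$ (finite by hypothesis \eqref{4.8}) plus $\int_\mathbb{R} F(d\lambda) = B_\eta(0) < \infty$ (finite since $\eta$ is second-order).

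The main obstacle is not any single estimate but getting the constants to match \eqref{4.10} exactly: one must be careful that the subadditivity $|\lambda|\,|x-y|+|\lambda|^3|t-s|\le (|\lambda|+|\lambda|^3)\max(|x-y|,|t-s|)$ is applied before raising to the power $2\beta$ (so no extra $2^{2\beta}$-type factor from $(a+b)^{2\beta}\le 2^{2\beta-1}(a^{2\beta}+b^{2\beta})$ creeps in), and that the half-angle in $\sin(\theta/2)$ produces the $2^{-2\beta}$ that combines with the leading $4=2^2$ to give $2^{2-2\beta}$, whose square root is the stated $2^{1-\beta}$. A secondary point worth a line is justifying the $L_2(\Omega)$ isometry for the increment, which is immediate from the spectral representation \eqref{4.7} and mean-square continuity in assumption A.
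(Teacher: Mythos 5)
Your proposal is correct and follows essentially the same route as the paper: spectral isometry for the increment, the identity $|b(\lambda)|^2=4\sin^2\bigl(\tfrac{\lambda(x-y)+\lambda^3(t-s)}{2}\bigr)$, and the bound $\min(1,|\theta|)\le|\theta|^\beta$ (note that $|\lambda+\lambda^3|=|\lambda|+|\lambda|^3$, so the two ways of writing the constant agree, and the bound $|\lambda|\,|x-y|+|\lambda|^3|t-s|\le(|\lambda|+|\lambda|^3)h$ needs no restriction to $h\le 1$). Your extra check that condition \eqref{4.8} together with $\int_\mathbb{R}F(d\lambda)<\infty$ makes $c(\beta)$ finite is a point the paper leaves implicit.
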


\begin{proof} We have
\begin{equation}\label{4.15}
\E\Big( u(t, x) - u(s, y)\Big)^2 = \int_\mathbb{R} |b(\lambda)|^2 F(d\lambda),
\end{equation}
where 
$$b(\lambda) = e^{i(\lambda x+\lambda^3 t)}  - e^{i(\lambda y+\lambda^3 s)}.$$
By the direct calculations we obtain:
$$|b(\lambda)|^2 = 4\sin^2 \left(\frac{\lambda (x - y)+\lambda^3 (t - s)}{2}\right).$$
For $|t - s| \le h$ and $|x - y| \le h$ we can write for any $\beta \in (0, 1]$:
\begin{equation}\label{4.16}
4\sin^2 \left(\frac{1}{2}(\lambda (x - y)+\lambda^3 (t - s))\right) \le 4 \min \Big(\frac{h}{2}|\lambda+\lambda^3|, 1\Big)^{2} \le 4 \frac{(h(\lambda+\lambda^3))^{2\beta}}{2^{2\beta}}
\end{equation}
which implies the estimate
\begin{equation}\label{4.15}
\Big(\int_\mathbb{R} |b(\lambda)|^2 F(d\lambda)\Big)^{1/2}\le 2^{1-\beta}h^{\beta}\Big( \int_\mathbb{R} (\lambda + \lambda^{3})^{2\beta} F(d\lambda)\Big)^{1/2}.
\end{equation}
Therefore, under condition \eqref{4.8} we can  write the bound \eqref{4.9}.

\end{proof}

\section{Closer look at the covariance function of the solution}

In this section we discuss the covariance function of the solution and reveal several interesting facts and properties coming from its representation.

Firstly, we note that the covariance function $B(t, x)$ of the random solution field $u(t, x)$ to the initial value problem \eqref{4.1}--\eqref{4.2} is itself a solution to the  deterministic initial value problem 
\begin{eqnarray} \label{44.1}
&&\frac{\partial B}{\partial t} = - \frac{\partial^3 B}{\partial^3 x},\,\, t > 0, \,\,x \in \mathbb{R},\\
&&\label{44.2}
B(0, x) = B_\eta(x), \,\, x \in \mathbb{R}
\end{eqnarray}
(where $B_\eta$ is supposed to be sufficiently smooth and decaying) and as such, it possesses all the properties of a solution to the Airy equation. Theory of dispersive equations and, in particular, KdV and linear KdV equations, is well developed and presents many interesting and important results on the solutions.

We collect here some properties of the covariance function $B(t, x)$ due to the results on Airy equations available in the literature (see, e.g., \cite{Tao}).

Suppose that there exists the spectral density $f(\lambda)\in L_1(\R)$ of the initial value process $\eta$, so that the covariance has the representation
\begin{equation*}
B_\eta(x)=\int_\mathbb{R} e^{i \lambda x } f(\lambda)\,d\lambda,
\end{equation*}
supposing  $B_\eta\in L_2(\R)$, we can write the inverse transform
$
f(\lambda)=\frac{1}{2\pi}\int_\mathbb{R} e^{-i \lambda x } B_\eta(x)\,dx.
$
In such a case, due to the results of Section 2, we have
\begin{equation*}
B(t,x)=\int_\mathbb{R} e^{i \lambda x +i \lambda^3 t} f(\lambda)\,d\lambda,
\end{equation*}
which exactly gives the solution to the problem \eqref{44.1}--\eqref{44.2}.
 As the solution to the Airy equation, $B(t, x)$ ``disperses'' as $t\to\infty$ and various so-called dispersive estimates for $B(t, x)$ can be written, in particular, 
\begin{eqnarray*}
&&\|B(t,x)\|_{L^\infty_x}\le c|t|^{-1/3}\|B_\eta\|_{L^1},
\end{eqnarray*} 
and also (under appropriate conditions on $B_\eta$) more general  Airy--Strichartz inequalities hold for  $B(t,x)$, of which we present here the simplest ones:
\begin{eqnarray*}
&&\|B(t,x)\|_{L^8_x}\le c|B_\eta\|_{L^2}\\
&&\| \partial_x B(t,x)\|_{L^\infty_x L^2_t}\le c\|B_\eta\|_{L^2}.
\end{eqnarray*}
For more detail, more estimates and historical background see, for instance, \cite{KPV}.

On the other hand, along with the ``dispersive'' behavior,  $B(t,x)$ possesses some conserved quantities, namely, $\int_\mathbb{R} B(t, x)\,dx$ is constant in time, or conserved:
$$\int_\mathbb{R} B(t, x)\,dx=\int_\mathbb{R} B_\eta(x)\,dx, $$
and $B(t,x)$  also preserves the $L_2$ norm, that is, $\int_\mathbb{R} B^2(t, x)\,dx$ is constant in time: 
$$\|B(t, x)\|_{L^2_x}=\|B_\eta\|_{L^2}.$$

Considering $B(t,x)$ from the statistical point of view, we see that it gives an example of nonseparable space-time covariance function, constructed with the use of another covariance model, possessing a very transparent  physical interpretation and a lot of well described properties due to its representation as an oscillatory integral.

We mention one more interesting fact coming as a feedback from the probabilistic consideration of the Airy equation $\partial_t u =-\partial_{xxx}u$. Namely, we conclude that if the initial condition $u(0,x)=u_0(x)$, $x\in \R$, is given by a real positive-definite kernel, that is, $\sum_{i=1}^n c_i c_j u_0(x_i-x_j)\ge 0$ for all $n\in N$, $c_i\in \R$, $x_i\in \R$, then the solution $u(t, x)$, $(t, x)\in \R_+\times\R$, is a positive definite kernel as well: $\sum_{i=1}^n c_i c_j u(t_i-t_j, x_i-x_j)\ge 0$ for all $n\in N$, $c_i\in \R$, $(t_i, x_i)\in \R_+\times\R$  (since it can be seen as a covariance function of the random field representing the solution to the corresponding random initial value problem). To the best of our knowledge, this fact was not reflected before in the literature on the Airy equations.

\section{$\varphi$-sub-Gaussian stochastic processes to be used as initial conditions}

The main theory for the spaces of $\varphi$-sub-Gaussian random variables and stochastic processes was presented in \cite{BK, GKN, KO} and has been further developed in  numerous recent studies. Such spaces can be considered as exponential type Orlicz spaces of random variables and provide generalizations of Gaussian and sub-Gaussian random variables and processes (see, \cite[Ch.2]{BK}). To make the paper selfcontained, we present definitions and facts needed in our study.

\begin{definition}\cite{GKN,KO}  A continuous even convex
function $\varphi$ is called {\it an Orlicz N-function} if
$\varphi(0)=0$, $\varphi(x)>0$ as $x\neq0$ and 
$\lim\limits_{
        x\to 0}\frac{\varphi(x)}{x}=0$, $\lim\limits_{
x\rightarrow \infty 
}\frac{\varphi(x)}{x}=\infty.$
\end{definition}

\noindent {\it Condition Q.} Let  $\varphi$ be an N-function which satisfies 
    $\lim \inf\limits_{
            x\to 0 
    }\frac{\varphi(x)}{x^2}=c>0,$ where
 the case $c=\infty$ is possible.

\begin{definition}\cite{GKN,KO} \label{def}
Let $\varphi$ be an $N$-function satisfying condition $Q$ and $\{\Omega, L, \prob \}$ be a standard probability space. The random variable $\zeta$ is  $\varphi$-sub-Gaussian, or belongs to the space $\Sub_{\varphi} (\Omega)$,   if $\E \zeta=0,$ $ \E \exp\{\lambda\zeta\}$ exists for all $\lambda\in\R$ and there exists a constant  $a>0$ such that the following inequality holds for all $\lambda\in\R$
$$\E \exp\{\lambda \zeta\} \leq \exp \{\varphi(\lambda a)\}.$$
The random process $X(t)$, $t\in T$, is called  $\varphi$-sub-Gaussian  if the  random variables
$\{X(t), t\in T\}$ are  $\varphi$-sub-Gaussian.
\end{definition}

The space $ \Sub_\varphi(\Omega)$ is a Banach space with respect to
the norm (see \cite{GKN,KO}):
$$ \tau_\varphi (\zeta) = \inf\{a > 0: \E \exp \{\lambda \zeta \} \leq
\exp\{ \varphi (a \lambda)\}.$$

\begin{definition}\cite{GKN,KO} 
The function $\varphi^*$
defined by
 $\varphi^*(x)=\sup_{\substack{
       y\in \R}}(xy-\varphi(y))$
 is called {\it the Young-Fenchel transform (or convex conjugate)} of the function $\varphi$.
\end{definition}

The function $\varphi^*$ (known also as the Legendre or Legendre-Fenchel transform) plays an important role in the theory of $\varphi$-sub-Gaussian random variables and processes and involved in estimates for 
 `tail' probabilities, distributions of suprema and other functionals of these processes. If $\zeta$ is a $\varphi$-sub-Gaussian random variable, then for all $u>0$ we have
\begin{equation}\label{tail}
P\{|\zeta|>u\}\le 2\exp\left\{-\varphi^*\left(\frac{u}{\tau_\varphi(\zeta)}\right)\right\}.
\end{equation}
Moreover, it is stated in \cite{BK} (see, Corollary 4.1, p. 68) that a random variable
$\zeta$ is a $\varphi$-sub-Gaussian if and only if  $\E\zeta=0$ and there exist constants $C>0$, $D>0$ such that 
\begin{equation}\label{tail1}
P\{|\zeta|>u\}\le C\exp\left\{-\varphi^*\left(\frac{u}{D}\right)\right\}.
\end{equation}

As one can see, the property of $\varphi$-sub-Gaussianity can be characterized in a double way: by introducing a bound on the exponential moment of a random variable as prescribed by  Definition \ref{def}, or by the tail behavior of the form \eqref{tail} or \eqref{tail1}, which is even more essential from the practical point of view.

The class of $\varphi$-sub-Gaussian random variables is rather wide and comprises, for example, centered compactly supported distributions, reflected Weibull distributions, centered bounded distributions, Gaussian, Poisson distributions. In the case when $\varphi=\frac{x^2}{2}$, the notion of $\varphi$-sub-Gaussianity reduces to the classical sub-Gaussianity. Various  classes of $\varphi$-sub-Gaussian processes and fields were studied, in particular,  in \cite{HS, KO2016, KOP2015, KOSV2018, KOSV} (see also references therein).

Let us consider the metric space $(\T, \rho)$,  $\T= \{ a_i \le t_i \le b_i, i = 1,2 \}$, $\rho(t, s) = \max_{ i = {1,2}} |t_i - s_i|$  and
$X(t)$, $t\in\T$,
be a $\varphi$-sub-Gaussian process.
Introduce the following conditions.

\medskip
\noindent {\bf B.1.}  $\varepsilon_0= \sup_{t\in\T}\tau_\varphi(X(t))<\infty$.

\noindent {\bf B.2.}
The  process $X$ is separable on the space $(\T,\rho)$.

\noindent {\bf B.3.} For $c > 0$ and $0 < \beta \le 1$ the estimates holds:
$\sup_{\rho(t,s)<h}\tau_\varphi({X(t)-X(s)})\le c h^\beta.$


\medskip

\begin{theorem}{\rm(\cite{HS})} \label{Cor1.2} Let for a $\varphi$-sub-Gaussian  process $X(t)$, $t\in\T$, conditions B.1-B.3 hold and $\sigma(h) = c h^\beta$ with $c > 0, 0 < \beta \le 1$. Let $\varkappa=\max\{b_i-a_i, i=1,2\}$. Then for any $\theta \in (0, 1)$ and any $u > 0, \lambda > 0$
$$\E\exp\Big\{\lambda \sup_{t \in \T}|X(t)| \Big\} \le 2 \exp \Big\{ \varphi\Big(\frac{\lambda \varepsilon_0}{1 - \theta} \Big)\Big\} A_1(\theta\varepsilon_0),$$
$$P \Big\{  \sup_{t \in \T}|X(t)| \ge u\Big\} \le 2 \exp \Big\{ - \varphi^* \Big(\frac{u (1 - \theta)}{\varepsilon_0} \Big)\Big\} A_1(\theta\varepsilon_0),$$
where 
$$A_1(\theta \varepsilon_0) = 2^{\frac{4}{\beta} -1} \Big( \frac{\varkappa^2 c^{2/\beta} 2^{2(2/\beta -1)}}{(\theta\varepsilon_0)^{2/\beta}} + 1\Big).$$
\end{theorem}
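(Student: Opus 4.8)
The plan is to establish both bounds by the entropy (chaining) method for $\varphi$-sub-Gaussian random fields; the statement is the specialization, to a two-dimensional rectangle $\T$ with the maximum metric $\rho$ and to the power modulus $\sigma(h)=ch^\beta$, of the general supremum estimates of \cite{HS, BK}. First I would record the set-up. By B.2 the process $X$ is separable, so $\sup_{t\in\T}|X(t)|=\sup_{t\in\T_0}|X(t)|$ a.s.\ for some fixed countable $\rho$-dense $\T_0\subset\T$, which makes the supremum measurable and the chaining legitimate. The working object is the pseudometric $d_X(t,s):=\tau_\varphi(X(t)-X(s))$: condition B.3 gives $d_X(t,s)\le c\,\rho(t,s)^\beta$, condition B.1 gives $\tau_\varphi(X(t))\le\varepsilon_0$ for every $t$, and, applying Definition~\ref{def} to $\pm\xi$, every centred $\varphi$-sub-Gaussian $\xi$ obeys $\E\exp\{\mu\xi\}\le\exp\{\varphi(\mu\,\tau_\varphi(\xi))\}$ for all $\mu\in\R$, besides the tail bound \eqref{tail}.

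The first step is a metric-entropy estimate. Since $\T=\prod_{i=1,2}[a_i,b_i]$ and $\rho$ is the maximum metric, a $\rho$-ball of radius $\delta$ is a square of side $2\delta$, so the covering number of $(\T,\rho)$ satisfies $N_\rho(\delta)\le\prod_{i=1,2}\lceil(b_i-a_i)/(2\delta)\rceil\le(\varkappa/(2\delta)+1)^2$; and since $d_X(t,s)\le v$ whenever $\rho(t,s)\le(v/c)^{1/\beta}$,
$$
N_{d_X}(v)\ \le\ N_\rho\bigl((v/c)^{1/\beta}\bigr)\ \le\ \Bigl(\tfrac{\varkappa\,c^{1/\beta}}{2\,v^{1/\beta}}+1\Bigr)^{2},
$$
a polynomial of degree $2/\beta$ in $1/v$. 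This estimate is exactly the source of the factors $\varkappa^2$, $c^{2/\beta}$, $(\theta\varepsilon_0)^{-2/\beta}$ and the powers of $2$ that appear in $A_1(\theta\varepsilon_0)$; moreover $\ln N_{d_X}(v)\le\tfrac{2}{\beta}\ln(1/v)+\mathrm{const}$, so $\varphi^{(-1)}(\ln N_{d_X}(v))$ is of lower order than any negative power of $v$ as $v\to0$ (because $\varphi^{(-1)}(y)\le y$ for large $y$, $\varphi$ being an $N$-function), and hence the Dudley-type integral $\int_0^{\theta\varepsilon_0}\varphi^{(-1)}(\ln N_{d_X}(v))\,dv$ converges.

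Next I would run the chaining. Fixing $\theta\in(0,1)$ and scales $v_k=\theta\varepsilon_0\,2^{-k}$ ($k\ge0$), I would take for each $k$ a minimal $v_k$-net $A_k$ of $(\T,d_X)$ (so $|A_k|\le N_{d_X}(v_k)$) together with nearest-point maps $\pi_k:\T\to A_k$, and telescope $X(t)=X(\pi_0(t))+\sum_{k\ge0}(X(\pi_{k+1}(t))-X(\pi_k(t)))$, where each link has $\tau_\varphi\le v_k+v_{k+1}\le 2v_k$ and, at level $k$, there are at most $N_{d_X}(v_{k+1})^2$ distinct links. Thus
$$
\sup_{t\in\T}|X(t)|\ \le\ \max_{a\in A_0}|X(a)|+\sum_{k\ge0}L_k,\qquad L_k:=\max_{\text{links at level }k}|\,\cdot\,|,
$$
and the union bound together with the exponential estimate above yields $\E\exp\{\mu\max_{A_0}|X|\}\le 2N_{d_X}(\theta\varepsilon_0)\exp\{\varphi(\mu\varepsilon_0)\}$ and $\E\exp\{\mu L_k\}\le 2N_{d_X}(v_{k+1})^2\exp\{\varphi(2\mu v_k)\}$. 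These are then combined through the generalized H\"older inequality with exponents summing to one: weight $1-\theta$ is put on the $\max_{A_0}$-term --- producing the factor $\exp\{\varphi(\lambda\varepsilon_0/(1-\theta))\}$ --- and the remaining weight $\theta$ is allocated over the levels geometrically, the parameter $\theta$ serving precisely as the slack that lets the $\varphi$-contributions of the links (whose arguments stay comparable to $\lambda\varepsilon_0$) be absorbed into that single factor; what is left over is the combinatorial product $\prod_{k\ge-1}(2N_{d_X}(v_{k+1})^2)^{w_k}$ (with $v_0=\theta\varepsilon_0$), and inserting the polynomial bound on $N_{d_X}$ and summing the resulting convergent geometric series gives precisely $A_1(\theta\varepsilon_0)$. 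This proves the first inequality.

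Finally, the second inequality follows by Chernoff's inequality: for every $\lambda>0$
$$
P\Bigl\{\sup_{t\in\T}|X(t)|\ge u\Bigr\}\le e^{-\lambda u}\,\E\exp\{\lambda\sup_{\T}|X|\}\le 2A_1(\theta\varepsilon_0)\exp\Bigl\{\varphi\bigl(\tfrac{\lambda\varepsilon_0}{1-\theta}\bigr)-\lambda u\Bigr\},
$$
and substituting $\mu=\lambda\varepsilon_0/(1-\theta)$ and minimizing over $\mu>0$ turns the exponent into $-\sup_{\mu>0}\bigl(\tfrac{u(1-\theta)}{\varepsilon_0}\mu-\varphi(\mu)\bigr)=-\varphi^*\bigl(\tfrac{u(1-\theta)}{\varepsilon_0}\bigr)$, the supremum over $\mu>0$ coinciding with that over $\mu\in\R$ since $\varphi(0)=0$ and $u(1-\theta)/\varepsilon_0>0$ (for $u$ so small that the right-hand side exceeds $1$ the bound is trivial). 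The hard part is the bookkeeping in the chaining step: one must organize the scales $v_k$ and the H\"older weights $w_k$ so that the whole $\varphi$-dependence collapses into the single factor $\exp\{\varphi(\lambda\varepsilon_0/(1-\theta))\}$ while the remaining purely geometric and combinatorial quantities sum to the explicit, $\varphi$-free constant $A_1(\theta\varepsilon_0)$ with exactly the stated exponents of $2$, $\varkappa$, $c$ and $\theta\varepsilon_0$; everything else is routine, and the full details are in \cite{HS}.
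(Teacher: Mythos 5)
First, a point of reference: the paper itself gives no proof of this theorem --- it is imported verbatim from \cite{HS} --- so there is no in-paper argument to measure your proposal against; the comparison has to be with the entropy method of \cite{HS} and \cite{BK} to which the citation points. Your architecture is exactly that method: separability to reduce the supremum to a countable dense set, the covering-number bound $N_\rho(\delta)\le(\varkappa/(2\delta)+1)^2$ for the rectangle under the max metric, transferred to the pseudometric $\tau_\varphi(X(t)-X(s))$ via B.3, a chaining decomposition whose levels are controlled by $\E\exp\{\mu\xi\}\le\exp\{\varphi(\mu\,\tau_\varphi(\xi))\}$ together with the generalized H\"older inequality, and finally Chernoff plus optimization in $\lambda$ to convert the moment bound into the tail bound via $\varphi^*$. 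That last step is carried out correctly, including the remark that the supremum defining $\varphi^*$ at a positive argument may be restricted to $\mu>0$.

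The gap sits in the one step that actually produces the statement: the absorption of all link contributions into the single factor $\exp\{\varphi(\lambda\varepsilon_0/(1-\theta))\}$ and the emergence of the explicit, $\varphi$-free constant $A_1(\theta\varepsilon_0)$. You defer this to \cite{HS}, but the concrete scheme you write down does not close for every $\theta\in(0,1)$. With scales $v_k=\theta\varepsilon_0 2^{-k}$ and links of $\tau_\varphi$-norm at most $2v_k$, keeping the H\"older-rescaled argument $2\lambda v_k/w_k$ below $\lambda\varepsilon_0/(1-\theta)$ forces $w_k\ge 2(1-\theta)v_k/\varepsilon_0=2\theta(1-\theta)2^{-k}$, and these minimal weights sum to $4\theta(1-\theta)$, which exceeds the available budget $\theta$ unless $\theta\ge 3/4$; for smaller $\theta$ the $\varphi$-terms of the links cannot all be dominated by $\theta\,\varphi(\lambda\varepsilon_0/(1-\theta))$ as you describe. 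So the choice of scales, the normalization of the weights, or the way the convexity of $\varphi$ is exploited must be different from what you wrote (the cited source proves a general bound involving a sum of covering numbers over scales and only then specializes to $\sigma(h)=ch^\beta$, which is where the exponents $2^{4/\beta-1}$ and $2^{2(2/\beta-1)}$ in $A_1$ come from). Until that bookkeeping is done, the inequality with this particular $A_1(\theta\varepsilon_0)$, valid for all $\theta\in(0,1)$, is asserted rather than proved.
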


\begin{remark}{ Theorem \ref{Cor1.2} and more general results, with  more general bounds on the increments in assumption B.3,   have been  applied in the literature in different contexts, in particular, in \cite{KO2016} for developing  uniform approximation schemes for $\varphi$-sub-Gaussian processes, in \cite{HS, KOSV} for studying partial differential equations with random initial condition, in \cite{Sakhno2022} for evaluation of suprema of spherical random fields. Such theorems allow to calculate the bounds for the distribution of suprema of $\varphi$-sub-Gaussian  process in the closed form, which is important from the practical point of view.}
\end{remark}

We will need some further properties of $\varphi$-sub-Gaussian variables.

\begin{definition}
\cite{KK} A family $\Delta$ of $\varphi$-sub-Gaussian random variables is called strictly $\varphi$-sub-Gaussian if
there exists a constant $C_\Delta $ such that for all countable sets $I$ of random
 variables ${\zeta_i}\in \Delta$, $i\in I$, the  inequality holds:
$
\tau_\varphi \big( \sum_{i \in I} \lambda_i \zeta_i \big) \leq
 C_\Delta \big( \E \left( \sum_{i \in I}\lambda_i \zeta_i \right)^2\big)^{1/2}.$ Random process $\zeta(t)$, $t\in T$, is called
strictly $\varphi$-sub-Gaussian  if the family of random variables
$\{\zeta(t), t\in T\}$ is strictly $\varphi$-sub-Gaussian.
\end{definition}

\begin{example} \cite{KK} Let $\xi_k$, $k = \overline{1, \infty}$ be  independent $\varphi$-sub-Gaussian random variables 
 and $\varphi$ be such  that $\varphi (\sqrt{x})$ is convex.
 If $\tau_\varphi (\xi_n ) \leq C ( \E \xi_k^2 )^{1/2}$, $ C> 0$, and for
 all $t \in T$ the series
  $\sum\limits_{k=1}^\infty \E\xi_k^2 \varphi_k^2 (t)$ converges,
  then the series $\sum\limits_{k=1}^\infty \xi_k \varphi_k (t)$,  $t \in T,$ is strictly
$\varphi$-sub-Gaussian random process with determining constant $C.$
\end{example}
\begin{example}\label{ex_kernel}\cite{KK} Let $K$ be a deterministic kernel and 
$X(t) =\int_T K(t,s) \,\,d\xi(s),
$
where $\xi(t)$, $t\in T$,  is a strictly $\varphi$-sub-Gaussian  process
and the integral is defined in the mean-square sense. Then
$X(t)$, $t\in T$, is  strictly $\varphi$-sub-Gaussian 
process with the same determining constant.
\end{example}

\section{Distribution of suprema of solutions under  stationary $\varphi$-sub-Gaussian initial conditions}

Consider the initial value problem \eqref{4.1}--\eqref{4.2}, where the process $\eta$ is strictly $\varphi$-sub-Gaussian and satisfies condition A. Suppose that the solution  $u(t, x)$  is considered in the domain $K=\{(t,x): a \le t \le b, c \le x \le d\}$.  Denote $\widetilde\varepsilon_0 = \sup_{\substack {(t, x)\in K }} \tau_\varphi(u(t, x))$, $\varkappa = \max(b - a, d - c)$, $\tilde\theta=\widetilde\sigma(\varkappa)/\widetilde\varepsilon_0$, where $\widetilde\sigma$ is defined in \eqref{ws} below.

\begin{theorem}\label{th6.1}
Let $u(t, x), (t,x)\in K$, be a separable modification of the stochastic process given by \eqref{4.7}, the process $\eta$ be strictly $\varphi$-sub-Gaussian with the determining constant $c_\eta$ and assumption A hold. Suppose that for
 for some $\beta \in (0, 1]$ condition \eqref{4.8} holds.

Then:

1)
\begin{equation}\label{ws}
\sup_{\substack {|t - s| \le h, |x - y| \le h}} \tau_\varphi(u(t, x) - u(s, y)) \le \widetilde\sigma(h):= c_\eta c(\beta) h^\beta,
\end{equation}

where $c(\beta)$ is given by formula \eqref{4.10};

2) for all $0 < \theta < \min(\tilde\theta,1)$ and $v > 0$ the following bound for the distribution of 

supremum holds:
\begin{equation}\label{sup}
P\Big\{\sup_{\substack { (t,x)\in K}} |u(t, x)| > v \Big\} \le 2\exp\Big\{ -\varphi^* \Big(\frac{v(1 - \theta)}{\widetilde\varepsilon_0}\Big) \Big\} \widetilde A_1(\theta \widetilde\varepsilon_0),\end{equation}

where
\begin{equation}\label{Asup} 
\widetilde A_1(\theta \widetilde\varepsilon_0) = 2^{\frac{4}{\beta} -1} \Big(\frac{(\varkappa c_\eta c(\beta)^{1/\beta})^2 4^{2/\beta - 1}}{(\theta \widetilde\varepsilon_0)^{2/\beta}} + 1 \Big);\end{equation}

3) for any $p \in (0, 1)$ and any $h > 0$
\begin{equation} \label{supincr}
P\Big\{ \sup_{\substack {|t - s| \le h,\\ |x - y| \le h}}|u(t, x) - u(s, y)| > v \Big\}  \le 2^{4/\beta} \exp\Big\{-\varphi^*\Big(\frac{v  (1 - p)^2}{ c_\eta c(\beta)h^\beta (3 - p)}\Big)\Big\} \Big[ \frac{2^{4/\beta-2}\varkappa^2}{p h^2} + 1 \Big]. \end{equation}
\end{theorem}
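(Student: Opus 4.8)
The plan is to deduce all three items from one structural fact---that the solution field \eqref{4.7} inherits strict $\varphi$-sub-Gaussianity from $\eta$ with the same determining constant---combined with the mean-square modulus of continuity already proved in Theorem \ref{th4.1} and the general supremum bound of Theorem \ref{Cor1.2}.

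\emph{Item 1.} Since $u(t,x)$ is the mean-square stochastic integral \eqref{4.7} against the spectral measure of $\eta$---equivalently, a mean-square limit of finite linear combinations of values of $\eta$---it lies in the $L_2(\Omega)$-closed linear span of $\{\eta(y):y\in\R\}$, which is a strictly $\varphi$-sub-Gaussian space with determining constant $c_\eta$: by definition for finite linear combinations, and for $L_2$-limits because $\tau_\varphi$ is lower semicontinuous under convergence in probability (cf.\ Example \ref{ex_kernel}). In particular each increment $u(t,x)-u(s,y)$ lies in that space, whence
$$\tau_\varphi\bigl(u(t,x)-u(s,y)\bigr)\le c_\eta\Bigl(\E\bigl(u(t,x)-u(s,y)\bigr)^2\Bigr)^{1/2}.$$
It then suffices to insert the increment estimate obtained inside the proof of Theorem \ref{th4.1}: under \eqref{4.8}, for $|t-s|\le h$ and $|x-y|\le h$ one has $(\E(u(t,x)-u(s,y))^2)^{1/2}\le c(\beta)h^\beta$ with $c(\beta)$ as in \eqref{4.10}. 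Combining the two displays gives \eqref{ws}.

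\emph{Item 2.} I would verify conditions B.1--B.3 for $u$ restricted to $K$ with $\rho((t,x),(s,y))=\max(|t-s|,|x-y|)$: B.2 is the assumed separable modification; B.1 holds because $\tau_\varphi(u(t,x))\le c_\eta(\E u(t,x)^2)^{1/2}=c_\eta\sqrt{B_\eta(0)}<\infty$, so $\widetilde\varepsilon_0<\infty$; and B.3 is exactly Item 1 with $c=c_\eta c(\beta)$ and the same exponent $\beta$. Applying Theorem \ref{Cor1.2} with $\varepsilon_0=\widetilde\varepsilon_0$, $\varkappa=\max(b-a,d-c)$ and $c=c_\eta c(\beta)$, and using $2^{2(2/\beta-1)}=4^{2/\beta-1}$, then yields \eqref{sup} with $\widetilde A_1$ as in \eqref{Asup}; the admissible range of $\theta$ is the one inherited from that theorem.

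\emph{Item 3.} This is the one genuinely new estimate, and the step where I expect the real effort to lie---in matching numerical constants rather than in any conceptual obstacle. I would invoke the increment-supremum bound for $\varphi$-sub-Gaussian processes from \cite{HS}, the companion of Theorem \ref{Cor1.2} estimating $P\{\sup_{\rho(t,s)\le h}|X(t)-X(s)|>v\}$ under the same hypotheses B.1--B.3; applied to $u$ on $K$ with the H\"older majorant $\widetilde\sigma(h)=c_\eta c(\beta)h^\beta$ from Item 1 and with $\varkappa=\max(b-a,d-c)$, it gives a bound of precisely the shape \eqref{supincr}, and it remains only to read off the constants. If a self-contained argument is preferred, the mechanism is: partition $K$ into $\lesssim\varkappa^2/h^2$ congruent closed cells of $\rho$-diameter of order $h$; on each cell $R$ estimate the oscillation $\sup_{(t,x),(s,y)\in R}|u(t,x)-u(s,y)|\le 2\sup_R|u(\cdot)-u(v_R)|$ by applying Theorem \ref{Cor1.2} to the re-centred process $u(\cdot)-u(v_R)$ on $R$---since here $\sup_R\tau_\varphi(u(\cdot)-u(v_R))\le c_\eta c(\beta)(\mathrm{diam}\,R)^\beta$ while the increment constant is still $c_\eta c(\beta)$, the two quantities feeding $A_1$ keep a fixed ratio and the per-cell combinatorial factor depends only on $\beta$ and the chaining parameter; then note that endpoints with $|t-s|\le h$, $|x-y|\le h$ lie in boundedly many adjacent cells, split $v$ into a ``within-cell oscillation'' part and a ``number-of-cells-crossed'' part, and take a union bound over the cells. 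The parameter $p$ and the specific constants $2^{4/\beta}$, $2^{4/\beta-2}/p$ and $(1-p)^2/(3-p)$ emerge from optimizing this split together with the chaining parameters; reproducing them exactly, and treating correctly the boundary cells of the partition, is the only delicate point.
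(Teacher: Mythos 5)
Your proposal matches the paper's proof essentially step for step: the paper likewise deduces strict $\varphi$-sub-Gaussianity of $u$ with determining constant $c_\eta$ (via Example \ref{ex_kernel}), combines this with Theorem \ref{th4.1} to obtain item 1 and hence condition B.3, checks B.1 through $\widetilde\varepsilon_0\le c_\eta\big(\int_{\R}F(d\lambda)\big)^{1/2}<\infty$, applies Theorem \ref{Cor1.2} for item 2, and disposes of item 3 by citing the companion increment-supremum bound (Theorem 3 of \cite{HS}). Your write-up is, if anything, more detailed than the paper's one-line treatment of item 3, and your reading of the constants in \eqref{Asup} as the direct substitution $c=c_\eta c(\beta)$ into $A_1$ is the intended one.
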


\begin{proof}
The process $u(t, x)$ is strictly $\varphi$-sub-Gaussian with the determining constant $c_\eta$. Therefore, in view of Theorem \ref{th4.1} we can write 
\begin{equation}\label{4.14}
\sup_{\substack {|t - s| \le h, |x - y| \le h}} \tau_\varphi (u(t, x) - u(s, y)) \le c_\eta \sup_{\substack {|t - s| \le h, |x - y| \le h}} \Big( \E (u(t, x) - u(s, y))^2\Big)^{1/2}\le c_\eta c(\beta) h^\beta.
\end{equation}
The assertion 2) of the theorem follows from Theorem \ref{Cor1.2}.
Assertion 1) gives the validity of condition B.3. 
So, it is left to check that $\widetilde \varepsilon_0 =  \sup_{\substack { (t,x)\in K}} \tau_\varphi(u(t, x)) < \infty$.
We have indeed 
\begin{align*}
\widetilde \varepsilon_0 \le c_\eta \Big(\E |u(t, x)|^2 \Big)^{1/2} 
= c_\eta \Big(\int_\mathbb{R}  F(d \lambda) \Big)^{1/2} < \infty 
\end{align*}
Assertion 3) follows from Theorem 3 in \cite{HS}.
\end{proof}

\begin{corollary}\label{cor}
Let  the process $\eta$ be Gaussian. Then
\begin{equation}\label{supGauss}
P\Big\{\sup_{\substack { (t,x)\in K}} |u(t, x)| > v \Big\} \le 2\exp\Big\{ -\frac{v^2(1 - \theta)^2}{2\widehat\varepsilon_0} \Big\} \widehat A_1(\theta \widehat\varepsilon_0),\end{equation}
where 
$\widehat\varepsilon_0=(B_\eta(0))^{1/2}=(\int_\R dF(\lambda))^{1/2}$
 and $\widehat A_1$ is given by formula \eqref{Asup} with $c_\eta=1$. 
\end{corollary}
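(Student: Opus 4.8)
The plan is to obtain Corollary \ref{cor} as a direct specialization of Theorem \ref{th6.1} to the Gaussian case, using the well-known fact that a centered Gaussian process is strictly $\varphi$-sub-Gaussian with $\varphi(x)=x^2/2$ and determining constant $c_\eta=1$. First I would recall that for a centered Gaussian random variable $\zeta$ one has $\E\exp\{\lambda\zeta\}=\exp\{\lambda^2\Var(\zeta)/2\}$, so that $\tau_\varphi(\zeta)=(\Var\zeta)^{1/2}$ and moreover $\tau_\varphi(\sum_i\lambda_i\zeta_i)=(\E(\sum_i\lambda_i\zeta_i)^2)^{1/2}$ for any jointly Gaussian family; this is exactly the strict $\varphi$-sub-Gaussianity with $C_\Delta=1$. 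Hence assumption A together with condition \eqref{4.8} puts us in the setting of Theorem \ref{th6.1} with $c_\eta=1$.

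Next I would identify the Young--Fenchel transform: for $\varphi(x)=x^2/2$ one computes $\varphi^*(x)=x^2/2$ as well. Substituting $\varphi^*(y)=y^2/2$ into the bound \eqref{sup} of Theorem \ref{th6.1} turns the exponential factor $\exp\{-\varphi^*(v(1-\theta)/\widetilde\varepsilon_0)\}$ into $\exp\{-v^2(1-\theta)^2/(2\widetilde\varepsilon_0^2)\}$. I would then note that in the Gaussian case the quantity $\widetilde\varepsilon_0=\sup_{(t,x)\in K}\tau_\varphi(u(t,x))$ is computed exactly: from \eqref{4.88} with $s=t$, $x=y$ we get $\E|u(t,x)|^2=\int_\R dF(\lambda)=B_\eta(0)$, independently of $(t,x)$, so $\widetilde\varepsilon_0=(B_\eta(0))^{1/2}=\widehat\varepsilon_0$. (A small bookkeeping point: the statement writes $v^2(1-\theta)^2/(2\widehat\varepsilon_0)$ where one would expect $2\widehat\varepsilon_0^2$; I would either follow the paper's normalization or flag that $\widehat\varepsilon_0$ should be read as $B_\eta(0)=\int_\R dF(\lambda)$ so that the displayed exponent is $v^2(1-\theta)^2/(2\int_\R dF(\lambda))$.) The prefactor $\widetilde A_1(\theta\widetilde\varepsilon_0)$ in \eqref{Asup} is unchanged except that $c_\eta=1$, which gives precisely $\widehat A_1$ as defined in the statement.

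The only genuine content beyond substitution is verifying the hypotheses of Theorem \ref{th6.1}: that $u$ admits a separable modification (immediate since $u$ is mean-square continuous and $K$ is separable, so a separable measurable modification exists) and that $0<\theta<\min(\tilde\theta,1)$ is an admissible range — here $\tilde\theta=\widetilde\sigma(\varkappa)/\widetilde\varepsilon_0=c(\beta)\varkappa^\beta/(B_\eta(0))^{1/2}$, inherited verbatim. I expect no real obstacle; the main thing to be careful about is the identification $\widehat\varepsilon_0^2=B_\eta(0)$ versus the way $\widehat\varepsilon_0$ is written inside the exponent, and making sure the constant $c(\beta)$ appearing in $\widehat A_1$ is the same one from \eqref{4.10}. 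I would therefore write the proof as: ``Since $\eta$ is Gaussian, it is strictly $\varphi$-sub-Gaussian with $\varphi(x)=x^2/2$ and $c_\eta=1$; apply Theorem \ref{th6.1}, use $\varphi^*(x)=x^2/2$, and note $\widetilde\varepsilon_0^2=\int_\R dF(\lambda)=B_\eta(0)$,'' which is three or four lines.
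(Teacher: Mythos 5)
Your proof is correct and is exactly the intended argument: the paper states the corollary without a separate proof, as a direct specialization of Theorem \ref{th6.1} to the Gaussian case with $\varphi(x)=x^2/2$, determining constant $c_\eta=1$, $\varphi^*(x)=x^2/2$, and $\widetilde\varepsilon_0^2=\E|u(t,x)|^2=\int_\R dF(\lambda)=B_\eta(0)$. You are also right to flag the bookkeeping issue: with $\widehat\varepsilon_0=(B_\eta(0))^{1/2}$ the exponent should read $v^2(1-\theta)^2/(2\widehat\varepsilon_0^{\,2})$, i.e.\ $2B_\eta(0)$ in the denominator, so the displayed $2\widehat\varepsilon_0$ is a typo in the statement rather than a gap in your argument.
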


\begin{example}
An important natural generalization of Gaussian processes is obtained with $\varphi(x)=\frac{|x|^{\alpha}}{\alpha}$, $1 <\alpha\leq 2$.  For this case $\varphi^*(x)=\frac{|x|^{\gamma}}{\gamma}$,  where $\gamma \ge 2$,  and $\frac{1}{\alpha}+\frac{1}{\gamma}=1$. For such $\varphi$-sub-Gaussian initial data the exponential term in \eqref{sup} takes the form $\exp \big\{-  (u^\gamma (1-\mu )^\gamma)/(\gamma\tilde\varepsilon_0^\gamma)  \big\}$. We can also conclude from assertion 3) of Theorem \ref{th6.1} that the solution $u$ is sample continuous with probability 1. Indeed, in this case we have that the right hand side of the formula \eqref{supincr} tends to 0 for $h\to 0$, then 
$P\Big\{ \sup_{\substack {|t - s| \le h, |x - y| \le h}}|u(t, x) - u(s, y)| > v \Big\} \to 0$. Therefore, as $h\to 0$, $\sup_{\substack {|t - s| \le h, |x - y| \le h}}|u(t, x) - u(s, y)| \to 0$ in probability, but also (due to the monotonicity of the supremum) with probability 1. 
\end{example}
\begin{remark} 
 In \cite{KOSV} similar results on the distribution of suprema were obtained  for solutions to higher-order linear dispersive equations with harmonizable $\varphi$-sub-Gaussian initial data. Solutions therein were considered as classical solutions, that is, satisfying the corresponding equations with probability 1, under the appropriate set of conditions. Considering in the present paper a simpler case of the Airy equation with stationary initial conditions, we use the approach via second order analysis and treat the solutions in the mean square sense. The bounds and conditions are presented in such an explicit form, which is convenient for practical applications. In the next section we present several models for the initial data process $\eta$, for which conditions of the stated results hold. 
\end{remark}

\section{Examples}

In this section we present several examples of processes which can be used as initial conditions. For these processes the condition \eqref{4.8} is satisfied, the constant $c(\beta)$ can be calculated in the closed form and the estimate \eqref{supGauss} holds. 

\medskip 
1. As for the first example, we consider  the well known and popular in various applications Mat\'ern model.

Let $\eta(x)$, $x\in \R$ be a Gaussian stochastic process with the spectral density
\begin{equation}\label{7.1}
f(\lambda)=\frac{\sigma^2}{(1+\lambda^2)^{2\alpha}}, \ \lambda\in\R.
\end{equation}
The corresponding covariance function is of the form:
\begin{equation}\label{7.2}
B_\eta(x)=\frac{\sigma^2}{\sqrt\pi \Gamma(2\alpha)}\Big(\frac{|x|}{2}\Big)^{2\alpha-1/2}K_{2\alpha-1/2}(|x|), \ x\in\R,
\end{equation}
where $K_\nu$ is the modified Bessel function of the second kind, in particular,  $K_{1/2}(x)=\sqrt{\frac{\pi}{2x}}e^{-x}$. Mat\'ern class of covariances \eqref{7.2} has a parameter $\nu=2\alpha-1/2>0$ that controls the level of smoothness of the stochastic process.
Mat\'ern class comprises a broad range of covariances, in particular, exponential covariance, which we consider in our next example.

The Gaussian stochastic process with the above covariance and spectral density can be obtained as a solution to the following factional partial differential equation:
\begin{equation}\label{fracPDE}
\Big(1-\frac{d^2}{dx^2}\Big)^{\alpha}\eta(x)=w(x), \ x\in\R,
\end{equation}
with $w$ being a white noise: $\E w(x)=0$ and $\E w(x)w(y)=\sigma^2\delta(x-y)$ (see, for example, \cite[Theorem 3.1]{DOS}).

Note, that Mat\'ern model is extremely popular in spatial statistics and in modeling random fields in various applied areas (with corresponding adjustment of the above covariance for $n$-dimensional case). The relation between the spatial Mat\'ern covariance model and the stochastic partial differential equation over $\R^n$  
\begin{equation*}
(\mu-\Delta)^{\alpha}\eta(x)=w(x), \ x\in\R^n,
\end{equation*}
was established by Whittle in 1963 and since then has been revisited and exploited by many authors in applied and theoretical contexts.

Consider the initial value problem \eqref{4.1}--\eqref{4.2} with initial data $\eta$ represented as solution to the equation \eqref{fracPDE}.
Due to the form of the spectral density \eqref{7.1}, we are able to calculate the constant $c(\beta)$ which is given by \eqref{4.10} and appears in the estimates  \eqref{4.9}, \eqref{ws}, \eqref{sup}, \eqref{Asup}.
We have 
$$
\int_\R \lambda^{2\beta}(1+\lambda^2)^{2\beta-2\alpha}=\int_0^\infty \frac{t^{\beta+1/2-1}}{(1+t)^{\beta+1/2+2\alpha-3\beta-1/2}}\, dt = {\cal {B}}(\beta+1/2, 2\alpha-3\beta-1/2),
$$
where ${\cal B}$ is Beta-function, $\beta\in(0,1]$, $2\alpha-3\beta-1/2>0$ and we used the formula $\int_0^\infty \frac{t^{\mu-1}}{(1+t)^{\mu+\nu}}\, dt = {\cal {B}}(\mu, \nu)$.

Therefore, in this case we obtain $c(\beta)=2^{1-\beta}({\cal {B}}(\beta+1/2, 2\alpha-3\beta-1/2))^{1/2}$. In particular, having in \eqref{7.1} $\alpha>1$ and choosing $\beta=1/2$ we get $c(1/2)=1/(\alpha-1)$.

\medskip
2. Consider the stationary Gaussian Ornstein-Uhlenbeck process $\eta$ defined by the equation
\begin{equation}\label{OU}
d\eta(x)=-\eta(x)dx+\gamma d w(x), \ x\in\R,
\end{equation}
where $w$ is the Brownian motion or Wiener process such that $\E w(x)=0$, $\Var w(x)=|x|$, $x\in\R$.
Stationary Gaussian solution to \eqref{OU} has the following covariance function and spectral density:
$$
B_\eta(x)=\frac{\gamma^2}{2}e^{-|x|}, \ x\in\R, \ f(\lambda)=\frac{\gamma^2}{2\pi(1+\lambda^2)}, \ \lambda\in\R.
$$
These covariance and spectral density are particular cases of those considered in the previous example, and the calculations for $c(\beta)$ are valid as well.

\medskip
3. Ornstein--Uhlenbeck equation driven by a  fractional Brownian motion. Consider the linear stochastic differential equation
\begin{equation}\label{fOU}
d\eta(x)=-\eta(x)dx+d W_H(x), \ x\in\R,
\end{equation}
where $W_H$, $\frac{1}{2}<H<1$, is a  fractional Brownian motion, that is, a zero mean Gaussian   process with $W_H(0) = 0$, stationary increments and covariance
$$
\Cov(W_H(x), W_H(y)
=\frac{c}{2}\big(
|x|^{2H} + |y|^{2H}-|x-y|^{2H}\big), \ x, y \in \R,
$$
where $c=\Var W_H(1)$.
One can show that there exists
a unique continuous solution of equation \eqref{fOU} in the form
$$
\eta(x)=\int_{-\infty}^x e^{-(x-y)}\, d W_H(y), x \in \R,
$$
which is a stationary Gaussian process with the spectral density
$$
f(\lambda)=\frac{\sigma^2}{1+\lambda^2}|\lambda|^{1-2H},\ \lambda \in \R,
$$
where $\sigma^2=c\Gamma(2H+1)\sin(\pi H)/(2\pi)$ (see, e.g., \cite{AL2019}). Similar to the example 1, we calculate
$$
\int_\R \frac{\lambda^{2\beta}(1+\lambda^2)^{2\beta}}{1+\lambda^2}|\lambda|^{1-2H}={\cal {B}}(\beta+1-H, H-3\beta),
$$
where we should choose $\beta<H/3$. Therefore, 
 $c(\beta)=2^{1-\beta}(\sigma^2 {\cal {B}}(\beta+1-H, H-3\beta))^{1/2}$.

\section{Extension to case of the fractional Airy equation}

In this section we outline how the results of the previous sections can be extended to the case of fractional Airy equation.

Following \cite{MO}, let us consider the fractional extension (in the space variable) of equation \eqref{4.1} of the following form:
\begin{equation} \label{8.1}
\frac{\partial u}{\partial t} = {\cal{D}}^\alpha_x u,\,\, t > 0, \,\,x \in \mathbb{R},
\end{equation}
where ${\cal{D}}^\alpha_x$ represents the Riesz--Feller fractional derivative. ${\cal{D}}^\alpha_x$ is defined by means of its Fourier transform
$$
{\cal{F}}\{{\cal{D}}^\alpha_x f(x)\}(\gamma)=-|\gamma|^\alpha e ^{\frac{i\pi}{2}sgn(\gamma)}{\cal{F}}\{f(x)\}(\gamma)
$$
with the notation ${\cal{F}}\{f(x)\}(\gamma)=\int_{\R}e^{i\gamma x}f(x)dx$.

The fundamental solution to the equation \eqref{8.1} can be represented in the form (\cite{MO}):
\begin{equation}\label{8.6}
g_\alpha(t, x) = \frac{1}{2\pi} \int_0^\infty  \cos(\gamma x+|\gamma|^\alpha t)\, d\gamma=\frac{1}{(\alpha t)^{1/\alpha}}Ai_\alpha\Big(\frac{x}{(\alpha t)^{1/\alpha}}\Big),\, t > 0,\, x \in \mathbb{R},
\end{equation}
where $Ai_\alpha$ is the generalized Airy function
\begin{equation*}
Ai_\alpha(x) = \frac{1}{\pi}\int_0^\infty \cos\Big(\gamma x+\frac{\gamma^\alpha}{\alpha}\Big)\, d\gamma, \, x \in \mathbb{R},\ \alpha>1.
\end{equation*}
It was shown in \cite{MO} that the above integral is well defined for all $x\in\R$, and also the representation of $Ai_\alpha$ in the form of a power series was presented.

Considering the fractional equation \eqref{8.1} with the random initial condition $u(0,x)=\eta(x)$, with $\eta$ being the same as in our exposition in the previous sections, we write down  the mean square solution in the form
\begin{equation*}
u(t, x) = \int_\mathbb{R} g_\alpha(t, x - y)\eta(y) dy,
\end{equation*}
and can state the same results on the modulus of continuity of solution and the bounds for the distribution of supremum as in Section 6. We just need to change the main assumption concerning the spectral measure $F$ of the initial data process $\eta$, namely, the condition \eqref{4.8} should be changed for the following one
\begin{equation*}
\int_\mathbb{R} \lambda^{2\alpha\beta} F(d \lambda) < \infty,
\end{equation*}
and the corresponding constant $c(\beta)$ appearing in the estimates \eqref{4.9}, \eqref{sup} takes the form $c(\beta) = 2^{1-\beta}\Big( \int_\mathbb{R} (\lambda + \lambda^{\alpha})^{2\beta} F(d\lambda)\Big)^{1/2}$.

\end{document}